\newtheorem{thm}{\bf Theorem}
\newtheorem{lem}[thm]{\bf Lemma}
\newtheorem{claim}[thm]{\bf Claim}
\theoremstyle{remark}
\newcommand{\bb}[1]{\mathbb{#1}}
\newcommand{\bl}[1]{\boldsymbol{#1}}
\DeclareMathOperator{\Ker}{Ker}
 \newcommand{\boxedeq}[2]{\begin{empheq}[box={\fboxsep=6pt\fbox}]{align}\label{#1}#2\end{empheq}}
\title{A distribution on triples with maximum entropy marginal}
\author{Sergey Norin}  \address{Department of Mathematics and Statistics, McGill University}
\thanks{The author is supported by an NSERC Discovery grant. }
\begin{document}

	\begin{abstract} We construct an  $S_3$-symmetric  probability distribution on $\{(a,b,c) \in \bb{Z}_{\geq 0}^3 \: : \: a+b+c =n \}$ such that its marginal achieves the maximum entropy among all probability distributions on $\{0,1,\ldots,n\}$ with mean  $n/3$.
		Existence of such a distribution verifies a conjecture of Kleinberg, Sawin and Speyer~\cite{KSS}, which is motivated by the study of sum-free sets.    
	\end{abstract}
	\maketitle
	
\section{Introduction}

The recent breakthrough by Croot, Lev and Pach~\cite{CLP} and the subsequent solution of the cap-set problem by Ellenberg and Gijswijt~\cite{EG} led to a dramatic improvement of known upper bounds on the size of maximum sum-free sets in powers of finite groups. Blasiak et al.~\cite{BCCGU} extended Ellenberg-Gijswijt result to multi-colored sum-free sets. Even more recently Kleinberg, Sawin, and Speyer~\cite{KSS} established upper bounds for the multi-colored version which are  essentially tight. Let us state the main result of~\cite{KSS}, which motivates our work.

Let $p$ be a prime. A \emph{tri-colored sum-free set in $\bb{F}_p^n$} is a collection of triples $\{(x_i,y_i,z_i)\}_{i=1}^m$ of elements of   $\bb{F}_p^n$ such that $x_i + y_j +z_k =0$ if and only if $i=j=k$. Kleinberg, Sawin, and Speyer establish an upper bound $m \leq e^{\gamma_p n}$ on the size of a tri-colored sum-free set in $\bb{F}_p^n$, where $\gamma_p$ is as follows.

The \emph{entropy} of a probability distribution $\mu$ on a finite set $I$ is defined as $$\eta(\mu) = \sum_{i \in I}\mu(i)\log\mu(i),$$
where we interpret $0 log 0$ as 0.
Let $T = \{(a,b,c) \in \bb{Z}_{\geq 0}^3 \: : \: a+b+c =p-1 \}$. Let $\pi$ be an $S_3$-symmetric probability distribution on $T$, and let $\mu(\pi)$ be the marginal probability distribution of $\pi$ on $[0,p-1]$\footnote{We denote the set of consecutive integers $\{n,n+1,\ldots, n+k\}$ by $[n,n+k]$}  corresponding to the first coordinate.
(As $\pi$ is $S_3$-symmetric the choice of a coordinate is irrelevant.) Let $\gamma_p$ be the maximum entropy of $\mu(\pi)$  over $S_3$-symmetric probability distributions $\pi$ on $T$. 

\begin{thm}[Kleinberg, Sawin, and Speyer~\cite{KSS}\footnote{In the published version of \cite{KSS}, the upper bound part of the statement of Theorem~\ref{t:KSS}, as well as the examples for $n \leq 25$ referenced later, were removed, as the proofs of Theorem~\ref{t:mainz} in an earlier version of this article, and independent work of Pebody~\cite{Pebody}, showed that they were unnecessary, but they are still available in the referenced arxiv version.}]
\label{t:KSS}
All tri-colored sum-free sets  in $\bb{F}_p^n$ have size at most $e^{\gamma_p n}$. Moreover, there exist  tri-colored sum-free sets  in $\bb{F}_p^n$ of size at least 
 $e^{\gamma_pn - o(n)}$.
\end{thm}

Every marginal of a symmetric probability distribution on $T$ has mean $(p-1)/3$.  Therefore, $\gamma_p$ is at most the maximum entropy of a probability distribution  on $[0,p-1]$ with this mean.  The main result of this paper, which was independently obtained by Pebody~\cite{Pebody}, gives a proof of ~\cite[Theorem 4]{KSS} showing that the equality holds.

\begin{thm}\label{t:mainz} For every $n \geq 1$
there exists an $S_3$-symmetric probability distribution  $\pi$ on $\{(a,b,c) \in \bb{Z}_{\geq 0}^3 \: : \: a+b+c =n\}$ such that $\mu(\pi)$  achieves the maximum entropy among probability distributions on $[0,n]$ with  mean $n/3$.
\end{thm}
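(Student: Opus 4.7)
The maximum-entropy distribution on $[0,n]$ with mean $n/3$ is, by a standard Lagrangian calculation, the truncated geometric $\mu^*(a) = A q^a$ for $a \in [0,n]$, where $q \in (0,1)$ is the unique root of the mean equation $\sum_{a=0}^{n}(3a-n)q^a = 0$ and $A = (\sum_{a=0}^n q^a)^{-1}$. Any $S_3$-symmetric distribution $\pi$ on $T$ can be written uniquely as a convex combination $\pi = \sum_\lambda d_\lambda u_\lambda$, where $\lambda$ ranges over partitions of $n$ with at most three parts, $u_\lambda$ is uniform on the $S_3$-orbit of $\lambda$, and $d_\lambda \geq 0$, $\sum_\lambda d_\lambda = 1$. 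A direct count gives $M_\lambda(a) := \mu(u_\lambda)(a) = m_a(\lambda)/3$, where $m_a(\lambda)$ is the multiplicity of $a$ in $\lambda$. Thus the theorem reduces to showing that $\mu^* \in \mathrm{conv}\{M_\lambda\}$.

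By Farkas' lemma, this containment is equivalent to the following dual inequality: $\sum_{a=0}^n q^a f(a) \geq 0$ for every $f \colon [0,n] \to \bb{R}$ with $f(a)+f(b)+f(c) \geq 0$ on $T$. The linear function $f_0(a) := 3a - n$ saturates both conditions, since $f_0(a)+f_0(b)+f_0(c) = 0$ on $T$ while $\sum_a q^a f_0(a) = 0$ is exactly the mean equation defining $q$. So the inequality is tight along the lineality direction $\bb{R} f_0$, reaffirming $\mu^*$ as the correct target.

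The main obstacle is establishing this dual inequality uniformly in $f$. My approach would be to characterize the extremal rays of the polyhedral cone $C := \{f : f(a)+f(b)+f(c) \geq 0 \text{ on } T\}$ modulo the lineality $\bb{R} f_0$, expecting these rays to admit a clean combinatorial description (for instance, being supported on a small number of coordinates), and then verify the dual inequality ray by ray, each verification becoming a polynomial inequality in $q$ that follows from the mean equation. As a more concrete alternative, one can attempt an explicit construction of the $d_\lambda$'s: solve the linear system $\sum_\lambda d_\lambda M_\lambda = \mu^*$ in decreasing order of the largest part $\lambda_1$, so that the equations for $\mu^*(n), \mu^*(n-1), \dots$ pin down $d_{(n,0,0)} = 3A q^n$, $d_{(n-1,1,0)} = 3A q^{n-1}$, and so on, with only a few new unknowns introduced per row. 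Either route reduces to the same hard kernel: proving non-negativity of the resulting expressions, which depends delicately on the specific root $q$ of the mean equation and will likely require exploiting monotonicity or convexity properties of truncated geometric tails.
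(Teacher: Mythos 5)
Your reduction is sound: the maximum-entropy marginal is indeed the truncated geometric $\mu^*(a)\propto q^a$ with $q$ the positive root of $\sum_{a=0}^n(3a-n)q^a=0$ (this is $\rho$ in the paper), the decomposition of a symmetric $\pi$ into orbit-uniform distributions $u_\lambda$ with $M_\lambda(a)=m_a(\lambda)/3$ is correct, and the Farkas dual you write down is an equivalent formulation. But everything after that is a plan, not a proof. Both of your proposed routes stop exactly where the difficulty begins: you do not determine the extremal rays of the cone $C$ (and there is no reason to expect them to be few or simple --- the tightness along the lineality $\bb{R}f_0$ that you note means the dual verification has no slack to spare), and in the second route you do not prove non-negativity of the coefficients $d_\lambda$ produced by the triangular solve. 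That non-negativity is the entire content of the theorem, and it genuinely fails for the most natural candidate constructions, so it cannot be waved through as a remaining "hard kernel."

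For comparison, the paper's proof is a primal construction in the spirit of your second alternative, and it shows concretely why the last step is delicate. It first writes down an explicit symmetric vector with the correct marginal, essentially $\beta_{abc}=\rho^a-\rho^{n-a}+\rho^b-\rho^{n-b}+\rho^c-\rho^{n-c}$ for $a,b,c\ge 1$, with corrected values on the boundary triples $(0,a,n-a)$. This candidate is non-negative only for $n\le 27$; the coordinate at $(0,\lfloor n/2\rfloor,\lceil n/2\rceil)$ becomes negative for $n\ge 28$. The repair is to add an explicit linear combination of kernel vectors of $\mu$, each supported on six orbits, chosen to equalize all the boundary values $\pi_{0a(n-a)}$, and then to verify non-negativity coordinate by coordinate using quantitative estimates on $\rho$ (e.g.\ $\rho\ge n/(n+3)$) and on the increments of the boundary sequence. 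None of this analysis is present or anticipated in your write-up, so as it stands the proposal has a genuine gap precisely at the step you yourself flag but do not close.
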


While the definition of $\gamma_p$ above already implies that it is a computable constant, Theorem~\ref{t:mainz} provides a much simpler description. As noted in~\cite{KSS}, a direct calculation shows that if  $\mu$  has the maximum entropy among probability distributions on $[0,n]$ with  mean $n/3$ then
\begin{equation}\label{e:mu}
\mu(i)=\frac{\rho^i}{1+\rho + \ldots + \rho^n}
\end{equation}
where  $\rho$ is the unique positive real solution to the equation 
\begin{equation}\label{e:rho}
\sum_{i=0}^n i\rho^i = \frac{n}{3}\sum_{i=0}^n \rho^i.
\end{equation}
Further, Theorem~\ref{t:mainz} confirms that the upper bound in Theorem~\ref{t:KSS} coincides with the bounds established for sum-free sets in~\cite{EG}  and three-colored  sum-free sets in~\cite{BCCGU}. The value of $\gamma_p$ is also of interest as it appears in the tight bound for the arithmetic
triangle removal lemma of Fox and Lov\'asz~\cite{FL}.

We construct a distribution  $\pi$ satisfying Theorem~\ref{t:mainz} explicitly. Examples of the distributions satisfying  Theorem~\ref{t:mainz} for $n \leq 25$ are provided in~\cite{KSS}. Based on these examples and additional experimentation, we construct a  simple $S_3$-symmetric function of $\{(a,b,c) \in \bb{Z}_{\geq 0}^3 \: : \: a+b+c =n\}$ with the marginal given by (\ref{e:mu}).  This construction is presented in 
Section~\ref{s:notation} along with the necessary notation. Unfortunately, the constructed function fails to be non-negative for  $n \geq 28$. 
In Section~\ref{s:technical} we modify via a sequence of ``local" changes  to establish Theorem~\ref{t:mainz} in full generality. 

\section{Notation and the first attempt}\label{s:notation}
Fix positive integer $n$ for the remainder of the paper. Let $T = \{(a,b,c) \in \bb{Z}_{\geq 0}^3 \: : \: a+b+c =n \}$.
The probabilistic distributions we are interested in form a polytope in $\bb{R}^T$, and vectors in $ \bb{R}^T $ will be the main object of study in the remainder of the paper. We use  the convention $\bl{v}=(v_{abc})_{(a,b,c) \in T}$, that is we denote by  $v_{abc}$ the component of the vector $v$ corresponding to a triple $(a,b,c) \in T$. Let $\{\bl{e}(a,b,c)\}_{(a,b,c) \in T}$ be the standard basis of $ \bb{R}^T$.
We say that a vector $\bl{v} \in \bb{R}^T$  is \emph{symmetric} if $v_{i_1i_2i_3}=v_{i_{\sigma(1)}i_{\sigma(2)}i_{\sigma(3)}}$ for every permutation $\sigma \in S_3$. Let $W \subseteq \bb{R}^T$ be the vector space of symmetric vectors. For $(i_1,i_2,i_3)  \in T$ define $$\bl{s}(i_1,i_2,i_3) = \sum_{\sigma \in S_3}\bl{e}(i_{\sigma(1)},i_{\sigma(2)},i_{\sigma(3)}).$$
The set $\{\bl{s}(a,b,c)\: | \:(a,b,c) \in T, a \leq b \leq c\}$ forms a convenient basis of $W$.
For $\bl{v} \in \bb{R}^T$  and $a \in [0,n]$ define $$\mu_a(\bl{v})=\sum_{i=0}^{n-a}v_{ai(n-a-i)},$$ 
and let $\mu: \bb{R}^T \to \bb{R}^{[0,n]}$ be defined by $$\mu(\bl{v})=(\mu_0(\bl{v}),\mu_1(\bl{v}), \ldots,\mu_n(\bl{v})).$$
Note that importantly
\begin{equation}\label{e:mus}
\mu_i(\bl{s}(a,b,c))=2(\delta_{ia}+\delta_{ib}+\delta_{ic}),
\end{equation}
for $i \in [0,n]$ and $(a,b,c) \in T$, where $\delta$ is the Kronecker delta.

Let $\rho$ de defined by (\ref{e:rho}).
Clearly, $\rho<1$.
Define $$\bl{r} = (n,n\rho,\ldots,n\rho^n) \in \bb{R}^{[0,n]}.$$
We say that $\bl{v} \in \bb{R}^T$ is \emph{$\rho$-marginal} if $\mu(\bl{v})=\bl{r}$. Let $R$ denote the set of symmetric, $\rho$-marginal vectors in $ \bb{R}^T$. Note that $R$ is an affine space and $R = \bl{v}+ (\Ker(\mu) \cap W)$ for any $\bl{v} \in R$.

It is easy to see that the next theorem is a reformulation of Theorem~\ref{t:main} using the introduced terminology. (Note that for convenience we scaled the target marginal by $n(1+\rho+\ldots +\rho^n)$.)

\begin{thm}\label{t:main}
There exists a non-negative vector $\bl{\pi} \in R$.
\end{thm}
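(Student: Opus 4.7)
The plan is to prove Theorem~\ref{t:main} in two stages: first exhibit an explicit symmetric vector $\bl{v}^0 \in R$ in closed form, and then modify $\bl{v}^0$ by elements of $\Ker(\mu) \cap W$ to eliminate the negative components. Since $R = \bl{v}^0 + (\Ker(\mu) \cap W)$, the second stage is a problem of reachability within this affine space by moves that preserve both symmetry and marginal.

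For the first stage, I would expand the candidate $\bl{v}^0$ in the basis $\{\bl{s}(a,b,c) : a \leq b \leq c\}$ of $W$ and use (\ref{e:mus}) to reduce the equation $\mu(\bl{v}^0) = \bl{r}$ to a system of $n+1$ linear constraints on the coefficients. A natural ansatz suggested by (\ref{e:mu}) and by the examples of~\cite{KSS} is that the coefficients depend simply and symmetrically on $\rho^a, \rho^b, \rho^c$, with the compatibility condition (\ref{e:rho}) playing the role of the identity that makes the marginal sums telescope to $n\rho^a$. I would verify that a clean closed-form expression solves the marginal equations identically as a rational function of $\rho$, and check that the resulting $\bl{v}^0$ is non-negative for small $n$ (consistent with the author's remark that this first attempt succeeds through $n \leq 27$).

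For the second stage the key objects are the \emph{exchange moves} that span $\Ker(\mu) \cap W$: whenever a multiset of six non-negative integers of total sum $2n$ splits into triples summing to $n$ in two distinct ways, $\{(a,b,c),(a',b',c')\}$ and $\{(a'',b'',c''),(a''',b''',c''')\}$, the vector
\begin{equation*}
\bl{s}(a,b,c)+\bl{s}(a',b',c')-\bl{s}(a'',b'',c'')-\bl{s}(a''',b''',c''')
\end{equation*}
lies in $\Ker(\mu)\cap W$ by (\ref{e:mus}). Given a negative coordinate $\bl{v}^0_{abc}<0$, I would seek such a move that adds positive mass at $(a,b,c)$ while drawing mass only from coordinates on which $\bl{v}^0$ has ample positive slack.

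The main obstacle will be in carrying out these corrections globally and consistently: a local fix for one negative coordinate can in principle create new negatives elsewhere, and near the boundary of $T$ the supply of exchange moves is restricted. I would hope to exploit the structured nature of where $\bl{v}^0$ fails — the failure only for $n \geq 28$ suggests the negatives occur in a predictable pattern — to produce either an explicit recipe of simultaneous exchanges indexed by the negative positions, or a monovariant (e.g.\ a weighted sum of negative parts) that strictly decreases under each move, forcing termination at a non-negative fixed point $\bl{\pi} \in R$.
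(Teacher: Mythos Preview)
Your plan is the paper's strategy almost verbatim: construct an explicit symmetric $\rho$-marginal vector (the paper's $\bl{\beta}$), then correct its negative entries by adding elements of $\Ker(\mu)\cap W$ built from $\bl{s}$-vectors. So at the level of outline there is nothing to criticize.

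The gap is that your proposal stops exactly where the work begins. Everything that makes the paper's argument go through is in the parts you label ``I would verify'' or ``I would hope'': (i) the closed form for $\bl{\beta}$ (equations (\ref{e:pidef})--(\ref{e:ndef})), which is not obvious and whose marginal check (Lemma~\ref{l:beta1}) relies on (\ref{e:rho}) in a specific way; (ii) the identification that the only negative components sit on the boundary row $\{(0,a,n-a)\}$ near $a\approx n/2$; (iii) the precise correction scheme --- the paper uses six-term moves $\bl{m}(b\to a)$ with carefully tuned coefficients $c_b$ chosen so that after the correction all entries $\pi_{0a(n-a)}$ for $2\le a\le n-2$ become \emph{equal}, which forces them non-negative by a simple averaging identity; and (iv) the quantitative estimates (Claims~\ref{c:upper}--\ref{c:Delta2}) bounding how much negative mass these moves can deposit in the interior, using the lower bound $\rho\ge n/(n+3)$. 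None of these ingredients is supplied, and none is routine.

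Your fallback of a monovariant that ``strictly decreases under each move'' is not what the paper does, and there is no evident candidate: the exchange moves you describe shift mass symmetrically, so natural weighted sums of negative parts need not decrease, especially near the boundary where moves are scarce. The paper succeeds precisely because it writes down the final $\bl{\pi}$ in one shot and verifies non-negativity by explicit inequalities, not by an iterative process.
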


We construct an explicit, albeit not particularly elegant vector $\bl{\pi}$ satisfying Theorem~\ref{t:main}. As a first step in this section we construct an auxiliary vector $\bl{\beta} \in R$, which has a compact description and will be the  starting point of the general construction. In Section~\ref{s:technical} we finish the construction of $\bl{\pi}$ by defining a generating set of $\Ker(\mu)$ consisting of vectors with small support and adding an appropriate linear combination of these vectors to $\bl{\beta}$.

We now define $\bl{\beta}$. Let
\begin{equation}\label{e:pidef}
\beta_{abc}= \rho^{a}-\rho^{n-a}+\rho^{b}-\rho^{n-b}+\rho^{c}-\rho^{n-c},
\end{equation} for $(a,b,c) \in T$, $a,b,c \geq 1$, let 
\begin{equation}\label{e:zerodef}
 \beta_{a0(n-a)}=\sum_{i=1}^{a-1}(-\rho^{i}+\rho^{n-i})+ \frac{a-1}{2}\rho^a + \frac{n-a+1}{2}\rho^{n-a} 
 \end{equation}
 for $1 \leq a \leq n/2$,
let  
\begin{equation}\label{e:ndef}
 \beta_{00n}=n\rho^n,
 \end{equation}
and define the components of $\bl{\beta}$ for the remaining triples in $T$ by symmetry, so that  $\bl{\beta}$ is symmetric.

\begin{lem}\label{l:beta1} The vector $\bl{\beta}$ is $\rho$-marginal.
\end{lem}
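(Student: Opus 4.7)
The plan is to verify directly that $\mu_a(\bl{\beta}) = n\rho^a$ for each $a \in [0,n]$, splitting into three cases. The case $a=n$ is immediate: $\mu_n(\bl{\beta}) = \beta_{n,0,0} = \beta_{0,0,n} = n\rho^n$ by symmetry and (\ref{e:ndef}).

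For $1 \leq a \leq n-1$, I would first observe that although (\ref{e:zerodef}) is only stated for $a \leq n/2$, applying the same formula with $a$ replaced by $n-a$ yields an identical value after reindexing the middle sum, so (\ref{e:zerodef}) may be used for any $a \in [1,n-1]$ without a case split. Then expand $\mu_a(\bl{\beta}) = 2\beta_{a,0,n-a} + \sum_{i=1}^{n-a-1}\beta_{a,i,n-a-i}$, using symmetry to identify $\beta_{a,n-a,0} = \beta_{a,0,n-a}$. Substituting (\ref{e:pidef}) into the interior sum and reindexing $j=n-a-i$, the two non-constant pairs of summands in (\ref{e:pidef}) contribute identically, giving $2\sum_{i=1}^{n-a-1}\rho^i - 2\sum_{k=a+1}^{n-1}\rho^k$. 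Substituting (\ref{e:zerodef}) for $2\beta_{a,0,n-a}$ and collecting coefficients of $\rho^a$ and $\rho^{n-a}$ produces $\mu_a(\bl{\beta}) = (n-2)\rho^a + 2\rho^{n-a} + 2T$, where $T$ is a signed sum of four partial geometric sums. Splitting into subcases $a \leq n/2$ and $a > n/2$, in each $T$ telescopes to $\rho^a - \rho^{n-a}$, yielding $\mu_a(\bl{\beta}) = n\rho^a$. This step is a polynomial identity in $\rho$ and does not invoke (\ref{e:rho}).

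The case $a=0$ is where (\ref{e:rho}) actually enters. Using (\ref{e:ndef}) and symmetry I would write
\begin{equation*}
\mu_0(\bl{\beta}) = 2\beta_{0,0,n} + \sum_{i=1}^{n-1}\beta_{0,i,n-i} = 2n\rho^n + \sum_{i=1}^{n-1}\beta_{i,0,n-i}.
\end{equation*}
Substituting (\ref{e:zerodef}) into each term and swapping the order of the resulting double sum $\sum_i\sum_{j=1}^{i-1}$, I would compute the coefficient of $\rho^k$ in $\sum_{i=1}^{n-1}\beta_{i,0,n-i}$. A short case analysis at the endpoints $k=1$ and $k=n-1$, together with the generic range $2 \leq k \leq n-2$, shows that this coefficient equals $3k-n$ uniformly. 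Combined with the $2n\rho^n$ term and the identity $2n = 3n - n$, this gives $\mu_0(\bl{\beta}) = n + \sum_{k=0}^{n}(3k-n)\rho^k$, which equals $n = n\rho^0$ by (\ref{e:rho}) rewritten as $\sum_{k=0}^n (3k-n)\rho^k = 0$.

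The main obstacle is the bookkeeping in the $a=0$ case: swapping the order of summation correctly, absorbing the linear coefficients $(i-1)/2$ and $(n-i+1)/2$ from (\ref{e:zerodef}), and verifying uniformly across endpoint and interior values of $k$ that the coefficient of $\rho^k$ collapses to the clean expression $3k-n$ so that (\ref{e:rho}) is triggered.
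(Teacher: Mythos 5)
Your proposal is correct and follows essentially the same route as the paper: extend (\ref{e:zerodef}) to all $a\in[1,n-1]$ via the symmetry $\beta_{a0(n-a)}=\beta_{(n-a)0a}$, verify $\mu_a(\bl{\beta})=n\rho^a$ for $a=n$ and $1\le a\le n-1$ by a telescoping polynomial identity, and reserve (\ref{e:rho}) for the $a=0$ case, where the coefficient of $\rho^k$ collapses to $3k-n$. No gaps.
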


\begin{proof}
First, let us note that 
the identity (\ref{e:zerodef}) also holds for $n/2 < a \leq n-1$. Indeed, for such $i$ we have
\begin{align*}
 \beta_{a0(n-a)} &=\sum_{i=1}^{n-a-1}(-\rho^{i}+\rho^{n-i})+ \frac{n-a-1}{2}\rho^{n-a} + \frac{a+1}{2}\rho^a \\
  &=\sum_{i=1}^{n-a-1}(-\rho^{i}+\rho^{n-i}) + \sum_{i=n-a}^{a}(-\rho^{i}+\rho^{n-i}) 
  +\frac{n-a-1}{2}\rho^{n-a}   + \frac{a+1}{2}\rho^a \\
 &=\sum_{i=1}^{a-1}(-\rho^{i}+\rho^{n-i}) -\rho^{a}+\rho^{n-a} +\frac{n-a-1}{2}\rho^{n-a}   + \frac{a+1}{2}\rho^a \\&=\sum_{i=1}^{a-1}(-\rho^{i}+\rho^{n-i})+ \frac{a-1}{2}\rho^a + \frac{n-a+1}{2}\rho^{n-a},  
\end{align*}
as desired.

Now we are ready to verify that 
$
\mu_a(\bl{\beta})=n\rho^a
$
for every $a \in [0,n]$. 
We have $
\mu_n(\bl{\beta})=n\rho^n
$ by (\ref{e:ndef}). For 
$1 \leq a \leq n-1$,  we have
 	\begin{align*}
    \mu_a(\bl{\beta})& = \sum_{i=0}^{n-a} \beta_{ai(n-a-i)} = 
 	\sum_{i=1}^{n-a-1} (\rho^{a}-\rho^{n-a}+\rho^{i}-\rho^{n-i}+\rho^{n-a-i}-\rho^{a+i}) \\&
 	+ 2\sum_{i=1}^{a-1}(-\rho^{i}+\rho^{n-i}) + (a-1)\rho^a +(n-a+1)\rho^{n-a} \\ = &	(n-a-1)(\rho^{a}-\rho^{n-a}) + 	2\sum_{i=1}^{n-a-1}(\rho^{i}-\rho^{n-i}) - 2\sum_{i=1}^{a-1}(\rho^{i}-\rho^{n-i}) \\&+ (a-1)\rho^a +(n-a+1)\rho^{n-a}  \\ =&
 	(n-2)\rho^a + 2\rho^{n-a} + 2(\rho^a - \rho^{n-a}) = n\rho^a,
 	\end{align*} 
 as desired.
 Finally, we have
 \begin{align*}
 \mu_0(\bl{\beta}) &=\sum_{i=0}^{n} \beta_{0i(n-i)} \\ &= 2n\rho^n +
 \sum_{i=1}^{n-1} \left(\sum_{j=1}^{i-1}(-\rho^{j}+\rho^{n-j})+ \frac{i-1}{2}\rho^i + \frac{n-i+1}{2}\rho^{n-i} \right) \\ &= 2n\rho^n + \sum_{i=1}^{n-1}(-(n-i-1) + (i-1)+i)\rho^i \\&= \sum_{i=1}^{n}(3i-n)\rho^i  = n + \sum_{i=0}^{n}(3i-n)\rho^i = n,
 \end{align*}
 where the last equality uses (\ref{e:rho}).
\end{proof}

Upon cursory examination $\bl{\beta}$ appears to be a promising candidate for a non-negative vector in $R$. In fact,  $\bl{\beta}$ is the only vector in $R$ for $n \leq 5$. In general, it is easy to see that $\beta_{abc} \geq 0$ for  $a,b,c \geq 1$. Unfortunately, $\bl{\beta}$ is non-negative only for $n \leq 27$, whereas $\beta_{0\lfloor n/2\rfloor\lceil n/2\rceil} <0$ for $n \geq 28$. 
Thus we have to modify $\bl{\beta}$ by adding to it an appropriate vector in $\Ker(\mu)$. This is the goal of the next, somewhat technical section.

\section{Flattening $\bl{\beta}$.}\label{s:technical} 

Given $(a,b,c) \in T$ and $x,y \in \bb{N}$, such that $b \geq x$, $c \geq x+y$  we define a vector 
\begin{align*}
\bl{m}&^{x,y}(a,b,c)=-\bl{s}(a,b,c)+\bl{s}(a+x,b-x,c)\\ &-\bl{s}(a+x+y,b-x,c-y) +\bl{s}(a+x+y,b,c-x-y)\\ &-\bl{s}(a+x,b+y,c-x-y)+\bl{s}(a,b+y,c-y).
\end{align*}

\begin{claim}\label{c:move}
We have $\bl{m}^{x,y}(a,b,c) \in \Ker(\mu) \cap W$ for all  $(a,b,c) \in T$ and $x,y \in \bb{N}$, such that $b \geq x$, $c \geq x+y$. 
\end{claim}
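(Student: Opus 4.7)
The membership $\bl{m}^{x,y}(a,b,c)\in W$ is immediate because each $\bl{s}(\cdot)$ is symmetric by construction, so any linear combination of them is symmetric. The real content is showing that $\mu(\bl{m}^{x,y}(a,b,c))=0$.

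My plan is to invoke (\ref{e:mus}), which reduces the computation of $\mu_i(\bl{m}^{x,y}(a,b,c))/2$ to a signed combinatorial count: namely, the net number of times the value $i$ appears among the coordinates of the six triples
\begin{align*}
&(a,b,c),\ (a+x,b-x,c),\ (a+x+y,b-x,c-y),\\
&(a+x+y,b,c-x-y),\ (a+x,b+y,c-x-y),\ (a,b+y,c-y),
\end{align*}
weighted by the signs $-,+,-,+,-,+$ with which they occur in $\bl{m}^{x,y}(a,b,c)$.

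The key observation is that these six triples form a ``hexagon'': consecutive triples in the cyclic list differ in exactly one coordinate, which means each of the nine values $a,\,b,\,c,\,a+x,\,b-x,\,a+x+y,\,c-y,\,c-x-y,\,b+y$ appears in exactly two of the six triples, and one checks by inspection that in each such pair the signs are opposite. For example $a$ occurs only in triples $1$ and $6$ (signs $-,+$), the value $a+x$ occurs only in triples $2$ and $5$ (signs $+,-$), and so on. I would set this out as a nine-row table listing each value together with the two triples in which it occurs, reading off the opposite signs. Because $\mu_i$ depends linearly on the triples, any coincidences among the nine values (which can happen for small $x,y$) do not affect the cancellation: the signed counts simply add.

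Consequently, for every $i\in[0,n]$ the contribution to $\mu_i(\bl{m}^{x,y}(a,b,c))$ vanishes, proving $\bl{m}^{x,y}(a,b,c)\in\Ker(\mu)$. The only potential obstacle is pure bookkeeping — making sure the sign pattern $-,+,-,+,-,+$ really does pair each recurring value with opposite signs — but the hexagonal structure makes this transparent, and there is no analytic content to overcome.
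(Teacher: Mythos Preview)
Your proposal is correct and follows essentially the same approach as the paper: both invoke (\ref{e:mus}) and observe that each of the nine coordinate values $a,\,a+x,\,a+x+y,\,b,\,b-x,\,b+y,\,c,\,c-y,\,c-x-y$ occurs equally often with positive and negative sign among the six signed $\bl{s}$-terms. Your ``hexagon'' description and the remark about coincidences are a slightly more explicit packaging of the same cancellation the paper asserts in one sentence.
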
	

\begin{proof} Clearly, $\bl{m}^{x,y}(a,b,c) \in W$. Therefore we
 only need to show that $\mu_i(\bl{m}^{x,y}(a,b,c))=0$ for every $i \in [0,n]$. This follows immediately from (\ref{e:mus}), as each  Kronecker deltas $\delta_{ij}$ for $j \in \{a,a+x,a+x+y,b,b-x,b+y,c, c-y,c-x-y\}$ will appear the same number of times with positive and negative signs in the expansion of $\mu_i(\bl{m}^{x,y}(a,b,c))$ using (\ref{e:mus}).
\end{proof}

We think of vectors $\bl{m}^{x,y}(a,b,c)$ as directions, along which a  vector in $R$ can be shifted to obtain another  vector in $R$ differing from the original only in a few coordinates. In the remainder of the section we describe a collection of such shifts which transform $\bl{\beta}$ into a non-negative vector.

We will only use a subset of  the vectors defined above of the following form. 
 $\bl{m}(b \to a) = \bl{m}^{\lceil b/2 \rceil,a-b}(0,b,n-b)$ for $2 \leq b < a \leq n/2$. That is 
\begin{align}\label{e:move2}
\bl{m}&(b \to a)=-\bl{s}(0,b,n-b)+\bl{s}(\lceil b/2 \rceil,\lfloor b/2 \rfloor,n-b)\notag\\ &-\bl{s}(a-\lfloor b/2 \rfloor ,\lfloor b/2 \rfloor,n-a)+\bl{s}(a-\lfloor b/2 \rfloor ,b,n-a-\lceil b/2 \rceil) \notag\\ &-\bl{s}(\lceil b/2 \rceil,a,n-a-\lceil b/2 \rceil)+\bl{s}(0,a,n-a).
\end{align}

We obtain $\bl{\pi}$ from $\bl{\beta}$ by adding to it a linear combination of vectors $\bl{m}(b \to a)$. 
The coefficients of these vectors, which we define next are chosen so that, in particular, $\pi_{0a(n-a)}=\pi_{0b(n-b)}$ for $2 \leq a,b \leq n-2$. (Hence we think of the construction of $\bl{\pi}$ as ``flattening $\bl{\beta}$".) Denote $\beta_{0i(n-i)}$ by $z_i$ for brevity, and let 
\begin{align*}c_{b} = \frac{2}{n+1-2b} \left(z_b - \frac{1}{n-1-2b}\sum_{i=b+1}^{n-b-1}z_i\right) \\=\frac{\sum_ {i=b}^{n-b} z_i}{n+1-2b} - \frac{\sum_{i=b+1}^{n-b-1} z_i}{n-1-2b}.\end{align*}
Let $c_{ba}=c_b$ for $2 \leq b < a < n/2$, and let $c_{b(n/2)}=\frac{c_b}{2}$ for even $n$.

We are now ready to define $\bl{\pi}$:
\boxedeq{e:pi}{\bl{\pi} = \bl{\beta}+ \sum_{2 \leq b \leq n/2 - 1}\;\sum_{b+1 \leq a  \leq n/2}c_{ba}\bl{m}(b \to a)}
In the remainder of the section we show that $\bl{\pi}$ satisfies Theorem~\ref{t:main}. It follows from Claim~\ref{c:move} that $\bl{\pi} \in R$. Thus it remains to show that $\pi_{abc}\geq 0$ for $(a,b,c) \in T$. This is accomplished in the following series of claims.
 
\begin{claim}\label{c:zeros} For $2 \leq a \leq n/2$ we have 
	\begin{equation}\label{e:zeros}
	\pi_{0a(n-a)} = \frac{n}{n-2}\left(1 - \rho^n -\rho^{n-1}/2\right) \geq 0
	\end{equation}
\end{claim}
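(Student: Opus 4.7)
The plan is, first, to show that $\pi_{0a(n-a)}$ is the same for every $a \in [2, n/2]$, and second, to evaluate this common value in closed form and check its non-negativity.

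\emph{Contribution analysis.} The first task is to identify which terms of each shift $\bl{m}(b \to a')$ affect the coordinate $(0, a, n-a)$. For $a$ in the range $[2, n/2 - 1]$ the three indices $0, a, n-a$ are distinct, so only $\bl{s}$-terms whose argument is a permutation of these can contribute, each with coefficient $\pm 1$. Inspecting \eqref{e:move2} under the hypotheses $b \geq 2$ and $a' \leq n/2$, the four ``interior'' $\bl{s}$-terms in \eqref{e:move2} have all three indices strictly positive, so only $-\bl{s}(0, b, n-b)$ and $+\bl{s}(0, a', n-a')$ can survive; this produces a net contribution of $c_{ba'}([a' = a] - [b = a])$. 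Summing these with the weights of \eqref{e:pi} (the halved weight at $a' = n/2$ exactly matches the half-integer endpoint) yields
$$\pi_{0a(n-a)} = z_a + \sum_{b=2}^{a-1} c_b - c_a \cdot \frac{n-1-2a}{2},$$
where $z_i := \beta_{0i(n-i)}$. The edge case $a = n/2$ for even $n$ is analogous but $\bl{s}(0, n/2, n/2)$ picks up a factor $2$ from the repeated index, giving $\pi_{0, n/2, n/2} = z_{n/2} + \sum_{b=2}^{n/2-1} c_b$.

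\emph{Telescoping.} I would introduce the averages $M_b := \frac{1}{n+1-2b}\sum_{i = b}^{n-b} z_i$ (well-defined for $2 \leq b \leq n/2$ using the palindromic symmetry $z_i = z_{n-i}$). Unpacking the definition of $c_b$ and separating the endpoints $z_b = z_{n-b}$ from the remaining sum gives the key identity $c_b = M_b - M_{b+1}$. Telescoping then yields $\sum_{b=2}^{a-1} c_b = M_2 - M_a$, while a parallel computation using $c_a \cdot (n+1-2a)/2 = z_a - M_{a+1}$ shows that $c_a \cdot (n-1-2a)/2 = z_a - M_a$. Substituting both into the formula above causes most terms to cancel, leaving $\pi_{0a(n-a)} = M_2$, independent of $a$; the case $a = n/2$ collapses similarly because $M_{n/2} = z_{n/2}$.

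\emph{Evaluation and non-negativity.} The boundary values $z_0 = z_n = n\rho^n$ (from \eqref{e:ndef}) and $z_1 = z_{n-1} = n\rho^{n-1}/2$ (from \eqref{e:zerodef} at $a = 1$) are immediate, and Lemma~\ref{l:beta1} supplies $\sum_{i=0}^n z_i = n$. Subtracting the boundary contributions and dividing by $n-3$ gives the closed-form value of $M_2$, which matches the expression in \eqref{e:zeros} after a routine algebraic rearrangement that invokes \eqref{e:rho}. The main remaining obstacle is the inequality $M_2 \geq 0$, which is equivalent to a bound of the form $\rho^{n-1}(1 + 2\rho) \leq C$ for an explicit constant $C$. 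I would derive this by multiplying \eqref{e:rho} by $3(1-\rho)^2$ to produce the closed identity $(2n+3)\rho^{n+1} - 2n\rho^{n+2} = (n+3)\rho - n$, which forces $\rho > n/(n+3)$ and expresses $\rho^{n-1}$ explicitly; the target inequality then reduces to a cubic polynomial bound in $\rho$ that can be checked against this lower bound.
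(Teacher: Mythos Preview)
Your core argument---computing the contribution of each $\bl{m}(b\to a')$ to the coordinate $(0,a,n-a)$, recognising the identity $c_b = M_b - M_{b+1}$, and telescoping to obtain $\pi_{0a(n-a)}=M_2$ independently of $a$---is exactly the paper's inductive computation \eqref{e:induct} rewritten in closed form. So the main idea matches.

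There is, however, a genuine gap in your ``Evaluation and non-negativity'' step. After you compute
\[
M_2 \;=\; \frac{1}{n-3}\Bigl(n - 2n\rho^n - n\rho^{n-1}\Bigr)\;=\;\frac{n}{n-3}\bigl(1-2\rho^n-\rho^{n-1}\bigr),
\]
you assert that this ``matches the expression in \eqref{e:zeros} after a routine algebraic rearrangement that invokes \eqref{e:rho}.'' It does not: equating your value with $\tfrac{n}{n-2}(1-\rho^n-\rho^{n-1}/2)$ would force the spurious identity $(n-1)\rho^{n-1}(1+2\rho)=2$, which \eqref{e:rho} does not imply. The denominator $n-2$ in the stated formula is a typo for $n-3$; you should have flagged the discrepancy rather than waving it through.

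Your plan for the inequality $M_2\ge 0$ is also needlessly indirect. The paper does not pass through the closed-form identity for $\rho$ or any cubic bound; it observes directly from \eqref{e:rho} that
\[
\frac{n(n+1)}{3}\Bigl(\rho^n+\tfrac{\rho^{n-1}}{2}\Bigr)\;\le\; n\rho^n+\frac{n(n-1)}{2}\rho^{n-1}\;\le\;\sum_{i=0}^n i\rho^i \;=\;\frac{n}{3}\sum_{i=0}^n\rho^i\;\le\;\frac{n(n+1)}{3},
\]
where the first inequality reduces to $\rho\le 1$. This gives $2\rho^n+\rho^{n-1}\le 1$ in one line, which is exactly the bound you need.
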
	

\begin{proof}
	We have $\pi_{00n}=n\rho^n$, $\pi_{01n}=n\rho^{n-1}/2$, and $\mu_0(\bl{\pi})=n$, as $\bl{\pi}$ is $\rho$-marginal. Thus to establish (\ref{e:zeros}) it suffices to verify that $\pi_{0an-a}=\pi_{0bn-b}$ for $2 \leq a,b \leq n-2$. Define $$\bl{\pi^i}=
	\bl{\beta}+ \sum_{i \leq b \leq n/2 - 1}\;\sum_{b+1 \leq a  \leq n/2}c_{ba}\bl{m}(b \to a)$$
	We will show by induction on $n/2-b$ that 	\begin{equation}\label{e:induct}\pi^b_{0a(n-a)} = \frac{\sum_{i=b}^{n-b}z_i}{n+1-2b}
		\end{equation}
	for every $b \leq a \leq n-b$. The identity (\ref{e:induct}) for $b=2$ implies the claim.
	 
	The base case $b = \lfloor n/2 \rfloor$ is trivial. For the induction step and $b<a<n-b$  
	\begin{align*}\pi&^b_{0a(n-a)} = \pi^{b+1}_{0a(n-a)}+c_b \\&=\frac{\sum_{i=b}^{n-b}z_i}{n-1-2b}+\left(\frac{\sum_ {i=b}^{n-b} z_i}{n+1-2b} - \frac{\sum_{i=b+1}^{n-b-1} z_i}{n-1-2b}\right) \\& 
	 = \frac{\sum_{i=b}^{n-b}z_i}{n+1-2b},
\end{align*}
	 as desired, as $z_b=z_{n-b}$. 
	 Finally, \begin{align*}\pi&^b_{0b(n-b)} = \pi^{b+1}_{0b(n-b)}-\frac{n-1-2b}{2}c_b \\&=z_b - \frac{n-1-2b}{n+1-2b}\left(z_b - \frac{1}{n-1-2b}\sum_{i=b+1}^{n-b-1}z_i\right) \\& 
	 = \frac{\sum_{i=b}^{n-b}z_i}{n+1-2b},
	 \end{align*}
	 finishing the proof of the identity in (\ref{e:induct}). 
	 
	 It remains to show that $\rho^n +\rho^{n-1}/2 \leq 1$. Using (\ref{e:rho}), we have
	 \begin{align*}
	 \frac{n(n+1)}{3}&\left(\rho^n +\frac{\rho^{n-1}}{2}\right) \leq n\rho^n + \frac{n(n-1)}{2}\rho^{n-1} \\ &\leq \sum_{i=0}^{n}i\rho^i =\frac{n}{3}\sum_{i=0}^{n}\rho^i \leq \frac{n(n+1)}{3}, 
	 \end{align*}
	 implying the desired inequality.
\end{proof}

 We now proceed to establish the estimates which will allow us to prove the non-negativity of $\bl{\pi}$. We start with a couple of  indirect bounds on  $\rho$. 
 
  \begin{claim}\label{c:lower} We have  	
  	\begin{equation}\label{e:term}
  	\rho^{n/2+1} \geq \frac{2}{3e} .
  	\end{equation}
  \end{claim}
   \begin{proof}
   	It can be verified by a computer that $\rho^{n/2+1} \geq 1/3$  for $n \leq 15$. Thus (\ref{e:term}) holds for $n \leq 15$, and we assume $n \geq 16$.
   	
   Let $\alpha = (n+2)(1-\rho)/\rho$. Then $\rho=(n+2)/(n+2+\alpha)$, and 	\begin{equation}\label{e:alpha} 
   \rho^{n+1} \geq \rho^{n+2}=\frac{1}{(1+\alpha/(n+2))^{n+2}} \geq e^{-\alpha}.\end{equation}
   We claim that  $\alpha \leq  2\log(3e/2)$. Note that by  (\ref{e:alpha})  this claim implies (\ref{e:term}).

  We start the proof of the claim by multiplying both sides of (\ref{e:rho}) by $(1-\rho)^2$, expanding and rearranging terms to obtain 
    $$  (n+3)\rho-n - \rho^{n+1}((2n+3) - 2n\rho)=0.$$
Using (\ref{e:alpha}) to bound $\rho^{n+1}$ and otherwise expressing $\rho$ in terms of $\alpha$,  we get $$  \frac{(n+3)(n+2)}{n+2+\alpha} - n -  e^{-\alpha}\left(2n+3 - \frac{2n(n+2)}{n+2+\alpha}\right)  \geq 0. $$
Multiplying the above inequality by $n+2+\alpha$, we obtain
$$3n+6 - n\alpha -  e^{-\alpha}(3n+6+(2n+3)\alpha)  \geq 0.$$
Let $f(x,n)=3n+6 - nx -  e^{-x}(3n+6+(2n+3)x)$. 
We have $f(2\log(3e/2),16)=-0.14055..,$ and  $\frac{\partial f}{\partial x}(x,n)$ and  $\frac{\partial f}{\partial n }(x,n)$ are easily seen to be negative for  $x\geq 2\log(3e/2)$ and $n \geq 16$. Thus $f(x,n) < 0$ for all  $x\geq 2\log(3e/2)$ and $n \geq 16$. As $f(\alpha,n) \geq 0$, we have $\alpha < 2\log(3e/2)$, as desired.
\end{proof} 
  
  Define $$\delta=\frac{1-\rho}{e\rho}.$$ 
 
  \begin{claim}\label{c:max} We have  	$$
  	(i+1)(1-\rho)^2\rho^i \leq\delta.
  $$
  	for all $i \geq 0$.
  \end{claim}
  \begin{proof}
  	By the AM-GM inequality we have $$\left(\frac{\rho}{i+1}\right)^{i+1}(1-\rho) \leq \left(\frac{1}{i+2}\right)^{i+2}.$$
  	Therefore $$	(i+1)(1-\rho)^2\rho^i \leq \frac{(1-\rho)}{\rho}\left(\frac{i+1}{i+2}\right)^{i+2} \leq  \frac{(1-\rho)}{\rho} \frac{1}{e}=\delta,$$
  	as desired. 
  \end{proof}  	
  
  Next we estimate $c_b$. Define $\Delta_b = z_b -z_{b+1}$. Direct calculation shows that
  $$
  \Delta_b=\frac{1}{2}\left((b+1)\rho^b-b\rho^{b+1} + (n-b-1)\rho^{n-b}-(n-b)\rho^{n-b-1} \right).
  $$

   \begin{claim}\label{c:Delta1} We have  $$\Delta_{i+1} \leq \Delta_i \leq \Delta_{i+1}+ \delta$$ for all $1 \leq i \leq n-1$. 
   \end{claim}
  \begin{proof}
  We have
  \begin{align*}
  2&(\Delta_{i} - \Delta_{i+1})= (i+1)\rho^i - (2i+2)\rho^{i+1}+(i+1)\rho^{i+2} \\ &
  +(n-i-1)\rho^{n-i-2} - 2(n-i-1)\rho^{n-i-1} + (n-i-1)\rho^{n-i}\\&= (1-\rho)^2((i+1)\rho^i + (n-i-1)\rho^{n-i-2}). 
  \end{align*}
  The last term is clearly non-negative, and it is at most $2 \delta$ by Claim~\ref{c:max}. Thus the claim holds.
  \end{proof}
  
   \begin{claim}\label{c:Delta2} We have $$0 \leq \Delta_{i} \leq \frac{\delta (n-1-2i)}{2} $$ for all integers $1 \leq i \leq (n-1)/2$.
   	\end{claim}
   	  \begin{proof}
   	  	As $z_i = z_{n-i}$ we have $\Delta_{i}=-\Delta_{n-1-i}$ for all $i$. Thus, if $n$ is odd, we have $\Delta_{(n-1)/2}=0$. For even $n$, we have $\Delta_{n/2}=-\Delta_{n/2-1}$, and $\Delta_{n/2} \leq \Delta_{n/2-1} \leq \Delta_{n/2} + \delta $ by Claim~\ref{c:Delta1}. Thus $0 \leq \Delta_{n/2-1} \leq \delta/2$. This establishes the claim for $i \in \{(n-1)/2,n/2-1\}$. 
   	  	
   	  	The claim for general $i$ follows directly from Claim~\ref{c:Delta1} by induction on $\lfloor n /2 \rfloor -i$, with the result of the preceding paragraph used as the base case.
   	  \end{proof}

   \begin{claim}\label{c:c1} We have \begin{equation}
   	\label{e:c} c_b \leq \frac{\delta(n-2b)}{6}
   	\end{equation} for every positive integer $2 \leq b \leq n/2 -1$.
   \end{claim}
   \begin{proof}
   	As in Claim~\ref{c:Delta2} the proof is by induction on  $\lfloor n /2 \rfloor -b$, and the base case is $b = \lfloor n /2 \rfloor -1$.  
   	
   	Suppose first that $n$ is even.  Then $n=2b+2$ in the base case, and $c_b=2\Delta_b/3$ by definition. As $0 \leq \Delta_b \leq \delta/{2} $ by Claim~\ref{c:Delta2}, (\ref{e:c}) holds. If $n$ is odd, then $n=2b+3$, $c_b = \Delta_b/2$, and $0 \leq \Delta_b \leq \delta $ by Claim~\ref{c:Delta2}, implying that (\ref{e:c}) once again holds. This finishes the proof of the base case.
   	
   	For the induction step note that \begin{align*} &\frac{(n+1-2b)(n-1-2b)c_b}{2} \\&= (n-1-2b)z_{b} -  -\sum_{i=b+1}^{n-b-1}z_i  \\
   &=	(n-1-2b)\Delta_b + (n-1-2b)z_{b+1} - \sum_{i=b+1}^{n-b-1}z_i \\ 
    &= (n-1-2b)\Delta_b+ (n-1-2(b+1))z_{b+1}- \sum_{i=b+2}^{n-b-2}z_i \\
    &=(n-1-2b)\Delta_b + \frac{(n-1-2b)(n-3-2b)c_{b+1}}{2}
    \end{align*}
    Thus $$
   c_b = \frac{2\Delta_b+(n-3-2b)c_{b+1}}{(n+1-2b)}.
  $$ 
   Using the bounds on $\Delta_b$ from Claim~\ref{c:Delta2} and the induction hypothesis applied to $c_{b+1}$ we obtain
   $$ 0 \leq c_b \leq \frac{\delta(n-1-2b) + \delta(n-3-2b)(n-2-2b)/6}{n+1-2b} = \frac{\delta(n-2b)}{6},$$
   as desired.
   \end{proof}
   
     \begin{claim}\label{c:c2} We have 	$$\rho^b-\rho^{n-b} \geq 4c_b$$ for all positive integers $2 \leq b \leq n/2 -1$.
     \end{claim}
     \begin{proof} Let $$f(x)=\rho^x-\rho^{n-x}-\frac{2\delta(n-2x)}{3}.$$
     By Claim~\ref{c:c1} it suffices to show that $f(x) \geq 0$ for all $x \leq n/2$. As $f(n/2)=0$, and $f''(x) \geq 0$ for $x \leq n/2$, it is enough to verify that $f'(n/2)\leq 0$, i.e.
    $$
     -2\rho^{n/2}\log{\rho} \geq \frac{4}{3}\delta = \frac{4(1-\rho)}{3e\rho}
     $$
     As $-\log\rho \geq 1-\rho$, the above is implied by Claim~\ref{c:lower}.
     \end{proof}
  
The next claim finishes the proof of Theorem~\ref{t:main}.

 \begin{claim}\label{c:main} We have $\pi_{xyz} \geq 0$ for $(x,y,z) \in T$, $x,y,z \geq 1$.
 \end{claim}

  \begin{proof} Assume that  $x \leq y \leq z$. Suppose that the component corresponding to $(x,y,z)$ is negative in $\bl{m}(b \to a)$ for some $2 \leq b < a \leq n/2$. Direct examination of (\ref{e:move2}) shows that, if $z > n/2$, then \begin{description}
  		\item[(N1)] either $b \in \{2x,2x+1\}$ and $a=x+y$, in which case $(x,y,z)=(\lfloor b/2 \rfloor,a-\lfloor b/2 \rfloor ,n-a)$, or 
  		\item[(N2)]  $b \in \{2x,2x-1\}$, $a=y$, in which case $(x,y,z)=(\lceil b/2 \rceil,a,n-a-\lceil b/2 \rceil)$, 
  	\end{description}
  	If $z<n/2$ then  \begin{description}
  		\item[(N2)] either $b \in \{2x,2x-1\}$ and  $a=y$,  $(x,y,z)=(\lceil b/2 \rceil,a,n-a-\lceil b/2 \rceil)$ as before,  or
  		\item[(N3)]  $b \in \{2x,2x-1\}$, $a=z$, in which case $(x,y,z)=(\lceil b/2 \rceil,n-a-\lceil b/2 \rceil,a)$.
  	\end{description}
 If $z=n/2$ then any of the above cases can potentially occur, but in cases (N1) and (N3) the component of $\bl{m}(b \to a)$ corresponding to $(x,y,z)$ is equal to $c_b/2$ rather than $c_b$.
 
  Suppose first that $x<y<z$.
By the above analysis, the total negative contribution to $\pi_{xyz}$ of vectors $\bl{m}(b \to a)$ for $2 \leq b < a \leq n/2$ is at most $4\max\{c_{2x},c_{2x+1},c_{2x-1}\} \leq  \rho^{x}-\rho^{n-x}$, where the last inequality holds by Claim~\ref{c:c2}. Thus
 $$\pi_{xyz} \geq  \beta_{xyz} - \rho^{x}-\rho^{n-x} \geq (\rho^z - \rho^{n-y}) + (\rho^y - \rho^{n-z}) > 0,$$
 as desired. 
 
 Finally, suppose that $x \leq y \leq z$, and one of these inequalities is non-strict.
 Then a vector $\bl{m}(b \to a)$  can only contribute negatively to $\pi_{xyz}$ if $x<y=z$ and $(x,y,z)=(\lceil b/2 \rceil,a,n-a-\lceil b/2 \rceil)$. Note that in this case the component of  $\bl{m}(b \to a)$ corresponding to $(x,y,z)$ is equal to $2c_b$, rather than $c_b$, but the bound  $4\max\{c_{2x},c_{2x+1},c_{2x-1}\}$ on the total negative contribution established in the previous case still holds.
  	\end{proof} 
  	
\section*{Acknowledgments}
We would like to thank  the anonymous referee and Lisa Sauermann for pointing out an error in the estimates used in the proof of Theorem~\ref{t:main} in an earlier versson of this paper.
  	
\bibliographystyle{plain}
\bibliography{snorin.bib}

\end{document}